\documentclass[11pt]{article}

\usepackage[a4paper,margin=1in]{geometry}
\usepackage{amsmath,amssymb,amsthm,mathtools}
\usepackage{enumitem}
\usepackage{hyperref}
\usepackage{microtype}
\usepackage{mathrsfs}

\hypersetup{
  colorlinks=true,
  linkcolor=blue,
  citecolor=blue,
  urlcolor=blue
}

\newtheorem{theorem}{Theorem}[section]
\newtheorem{lemma}[theorem]{Lemma}
\newtheorem{corollary}[theorem]{Corollary}
\theoremstyle{definition}
\newtheorem{definition}[theorem]{Definition}
\newtheorem{problem}[theorem]{Problem}
\theoremstyle{remark}
\newtheorem{remark}[theorem]{Remark}

\newcommand{\D}{\mathbb D}
\newcommand{\T}{\mathbb T}
\newcommand{\C}{\mathbb C}

\newcommand{\ds}{\,ds}

\newcommand{\QR}{\operatorname{QR}}
\newcommand{\Car}{\operatorname{Car}}

\title{Gabriel-Type Estimates for Harmonic Quasiregular Mappings and Stoilow Classes}
\author{
  Elver Bajrami\\
  \small Department of Mathematics, University of Prishtina, Prishtina, Kosovo\\
  \small \texttt{elver.bajrami@uni-pr.edu}
}
\date{}

\begin{document}
\maketitle

\begin{abstract}
We establish a dilatation-dependent Gabriel inequality for sense-preserving harmonic $K$-quasiregular mappings in $h^2$. If $f=h+\overline g\in h^2$ satisfies $|g'|\le k|h'|$, where $K=(1+k)/(1-k)$, then every convex curve $\Gamma\subset\D$ satisfies
\[
   \int_\Gamma |f(z)|^2\ds(z)
   \le 2\frac{(1+k)^2}{1+k^2}
      \int_\T |f^*(\zeta)|^2\,|d\zeta|
   =\frac{4K^2}{K^2+1}
      \int_\T |f^*(\zeta)|^2\,|d\zeta|.
\]
The coefficient recovers the sharp analytic constant $2$ at $K=1$, is strictly smaller than the general harmonic constant $4$ for every finite $K$, and tends to $4$ as $K\to\infty$. The proof retains quantitative information on the analytic and co-analytic parts through the Hardy--Stein identity and the complex dilatation bound. To place this estimate in a broader quasiregular setting, we develop a maximal-function principle based on the Carleson-measure geometry of convex curves. We also identify a log-subharmonic modulus class that inherits the sharp analytic constant and prove Stoilow-factorization criteria under explicit boundary-weight and cone-distortion assumptions. These results distinguish distortion-dependent estimates from those arising through general boundary maximal control. The resulting formulation clarifies which conclusions follow from harmonic quasiregularity and which require separate boundary regularity assumptions.
\end{abstract}

\noindent\textbf{Keywords.} Gabriel's theorem; harmonic quasiregular mappings; Hardy spaces; convex curves; Carleson measures; Stoilow factorization.\\
\textbf{MSC 2020.} Primary 30C65, 30H10; Secondary 31A05, 30C62.

\section{Introduction}

Let
\[
   \D=\{z\in\C: |z|<1\}, \qquad \T=\partial\D,
\]
and let $|d\zeta|$ denote arclength measure on $\T$. For $0<p<\infty$, the analytic Hardy space $H^p$ consists of all analytic functions $f$ in $\D$ for which
\[
   \|f\|_{H^p}^p
   :=\sup_{0<r<1}\int_\T |f(r\zeta)|^p\,|d\zeta|<\infty.
\]
The harmonic Hardy space $h^p$ is defined analogously for complex-valued harmonic mappings. Gabriel proved that if $f\in H^p$ and $\Gamma$ is a convex curve in $\D$, then
\begin{equation}\label{eq:Gabriel}
   \int_\Gamma |f(z)|^p\ds(z)
   \le 2\int_\T |f^*(\zeta)|^p\,|d\zeta|.
\end{equation}
Here $f^*$ is the radial boundary function. The constant $2$ is sharp; see \cite{Gabriel1928,Duren1970}.

A complex-valued harmonic analogue was recently obtained by Das \cite{Das2025}. If $f\in h^p$, $p>1$, and $\Gamma\subset\D$ is convex, then
\begin{equation}\label{eq:Das}
   \int_\Gamma |f(z)|^p\ds(z)
   \le A_h(p)\int_\T |f^*(\zeta)|^p\,|d\zeta|,
\end{equation}
where
\begin{equation}\label{eq:Ah}
   A_h(p)=
   \begin{cases}
      4, & p\ge2,\\[3pt]
      2\sec^p\!\left(\dfrac{\pi}{2p}\right), & 1<p<2.
   \end{cases}
\end{equation}
Das also showed that the convex-curve inequality does not extend in this general form to the full range $0<p\le1$.

The constant $4$ in the case $p=2$ applies to the whole harmonic Hardy space and therefore does not reflect any quantitative control of the analytic and co-analytic parts. The principal aim of this paper is to show that bounded complex dilatation yields an explicit improvement. Recall that a complex-valued harmonic mapping has a decomposition
\[
   f=h+\overline g,
\]
where $h$ and $g$ are analytic. If $f$ is sense-preserving and
\[
   |g'(z)|\le k|h'(z)|, \qquad 0\le k<1,
\]
then $f$ is $K$-quasiregular with $K=(1+k)/(1-k)$.

\begin{theorem}[Dilatation-dependent Gabriel inequality]\label{thm:main}
Let $f=h+\overline g\in h^2$ be a sense-preserving harmonic mapping in $\D$ satisfying
\[
   |g'(z)|\le k|h'(z)|\qquad (0\le k<1).
\]
Choose the decomposition normalized by $g(0)=0$, and put $K=(1+k)/(1-k)$. Then every convex curve $\Gamma\subset\D$ satisfies
\begin{equation}\label{eq:main-k}
   \int_\Gamma |f(z)|^2\ds(z)
   \le 2\frac{(1+k)^2}{1+k^2}
      \int_\T |f^*(\zeta)|^2\,|d\zeta|,
\end{equation}
and equivalently
\begin{equation}\label{eq:main-K}
   \int_\Gamma |f(z)|^2\ds(z)
   \le \frac{4K^2}{K^2+1}
      \int_\T |f^*(\zeta)|^2\,|d\zeta|.
\end{equation}
\end{theorem}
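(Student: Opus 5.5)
The plan is to split $f$ into its analytic and co-analytic parts, apply Gabriel's analytic inequality \eqref{eq:Gabriel} with $p=2$ to each part separately, and recombine. The loss in recombining is then controlled by two facts: the parts are orthogonal on $\T$ because $g(0)=0$, and the dilatation bound makes $g$ small relative to $h$ in $H^2$.

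\emph{Step 1 (the parts lie in $H^2$ and are orthogonal).} First I check that $h,g\in H^2$. Write the Fourier expansion of $f(r\zeta)$. The coefficients of nonnegative index come from $h$, and those of negative index come from $\overline g$; the normalization $g(0)=0$ ensures no overlap at index $0$. By Parseval,
\[
\int_\T|f(r\zeta)|^2|d\zeta|=\int_\T|h(r\zeta)|^2|d\zeta|+\int_\T|g(r\zeta)|^2|d\zeta| .
\]
So $\sup_r$ of each term is finite, and $h,g\in H^2$. Letting $r\to1$ gives $\|f^*\|_{L^2(\T)}^2=\|h\|_{H^2}^2+\|g\|_{H^2}^2$. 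Equivalently, the cross term $2\operatorname{Re}\int_\T h g\,|d\zeta| = 4\pi\operatorname{Re}\big(h(0)g(0)\big)$ vanishes.

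\emph{Step 2 (dilatation bound via Hardy--Stein).} The Hardy--Stein (Littlewood--Paley) identity gives
\[
\|u\|_{H^2}^2=2\pi|u(0)|^2+4\int_\D|u'(z)|^2\log\frac1{|z|}\,dA(z)
\]
for analytic $u$ (with the constant fixed by the normalization of $|d\zeta|$). Apply it to $u=g$, where $g(0)=0$, and use $|g'|\le k|h'|$:
\[
\|g\|_{H^2}^2\le k^2\big(\|h\|_{H^2}^2-2\pi|h(0)|^2\big)\le k^2\|h\|_{H^2}^2 .
\]

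\emph{Step 3 (curve estimate).} On $\Gamma$ we have $|f|\le|h|+|g|$. By Minkowski's inequality in $L^2(\Gamma,ds)$ and Gabriel's inequality \eqref{eq:Gabriel} for $h$ and for $g$,
\[
\Big(\int_\Gamma|f|^2ds\Big)^{1/2}\le\Big(\int_\Gamma|h|^2ds\Big)^{1/2}+\Big(\int_\Gamma|g|^2ds\Big)^{1/2}\le\sqrt2\,(a+b),
\]
where $a=\|h\|_{H^2}$ and $b=\|g\|_{H^2}$.

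\emph{Step 4 (optimization).} Combining Steps 1--3,
\[
\int_\Gamma|f|^2ds\le 2\,\frac{(a+b)^2}{a^2+b^2}\,\|f^*\|_{L^2(\T)}^2 ,\qquad 0\le b\le ka .
\]
The function $t\mapsto(1+t)^2/(1+t^2)$ is increasing on $[0,1]$; its derivative has the sign of $1-t^2$. Since $t=b/a\le k<1$, the ratio is at most $(1+k)^2/(1+k^2)$, which gives \eqref{eq:main-k}. The degenerate case $a=0$ forces $b=0$ and $f\equiv0$, so it is trivial.

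The identity $(1+k)^2/(1+k^2)=2K^2/(K^2+1)$ follows from substituting $k=(K-1)/(K+1)$. This gives \eqref{eq:main-K}.

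\emph{Expected obstacle.} The only delicate point is Step 1. I must justify the orthogonal splitting and that $h,g\in H^2$ from $f\in h^2$ alone, and also identify $f^*$ with $h^*+\overline{g^*}$ almost everywhere. Both follow from Parseval at radius $r$ and the standard radial-limit theory in $H^2$. Step 2 also needs $h(0)$ handled correctly, but dropping the term $-2\pi|h(0)|^2$ only weakens the bound in a harmless direction.
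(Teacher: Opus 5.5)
Your proposal is correct and follows the paper's proof essentially step for step. Both arguments use the same ingredients in the same order:
\begin{enumerate}[label=(\roman*)]
\item the orthogonal boundary splitting $\|f^*\|_{L^2(\T)}^2=\|h\|_{H^2}^2+\|g\|_{H^2}^2$ coming from $g(0)=0$;
\item the Hardy--Stein comparison $\|g\|_{H^2}^2\le k^2\bigl(\|h\|_{H^2}^2-2\pi|h(0)|^2\bigr)$;
\item Gabriel's inequality applied to $h$ and $g$ separately and recombined, where your Minkowski step is the same as the paper's triangle inequality plus Cauchy--Schwarz;
\item monotonicity of $(1+t)^2/(1+t^2)$ on $[0,1]$.
\end{enumerate}
Your Parseval argument in Step~1 also supplies the justification that $h,g\in H^2$ and $f^*=h^*+\overline{g^*}$ a.e., which the paper only asserts.
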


The coefficient $4K^2/(K^2+1)$ is strictly increasing for $K\ge1$, equals $2$ at $K=1$, and tends to $4$ as $K\to\infty$. Hence Theorem \ref{thm:main} gives a strict improvement over the general harmonic constant for every finite $K$. The proof, given in Section~\ref{sec:harmonic-main}, is not a formal restriction of \eqref{eq:Das}: it combines the exact $h^2$ boundary-energy splitting of the analytic and co-analytic parts with the estimate imposed by the complex dilatation. We do not claim sharpness for $1<K<\infty$; determining the optimal dependence on $K$ remains open.

We next place the main estimate in a broader quasiregular framework. A mapping $f:\D\to\C$ is $K$-quasiregular if $f\in W^{1,2}_{\mathrm{loc}}(\D)$, $J_f\ge0$ a.e., and
\begin{equation}\label{eq:qr}
   |Df(z)|^2\le KJ_f(z) \qquad \text{for a.e. }z\in\D.
\end{equation}
Equivalently, in complex notation, $|f_{\bar z}|\le k|f_z|$ a.e., where $K=(1+k)/(1-k)$. Background on quasiregular and quasiconformal mappings may be found in \cite{Vaisala1971,AstalaKoskela2011,AdamowiczGonzalez2021,AdamowiczGonzalez2025}.

The most direct radial analogue of $H^p$ is the class
\begin{equation}\label{eq:radial-class}
   \mathcal H^p_{qr,K}(\D)
   =\left\{f\in\QR_K(\D):
     \sup_{0<r<1}\int_\T |f(r\zeta)|^p\,|d\zeta|<\infty\right\}.
\end{equation}
Unlike analytic Hardy-space functions, general quasiregular mappings satisfying \eqref{eq:radial-class} need not possess almost everywhere boundary values. Consequently, a boundary-norm inequality with right-hand side
\[
   \int_\T |f^*(\zeta)|^p\,|d\zeta|
\]
requires additional boundary information. This obstruction motivates the maximal-function and Stoilow formulations developed below.

The paper complements Theorem \ref{thm:main} in three directions.
\begin{enumerate}[label=(\roman*)]
\item We establish a maximal-function principle: non-tangential boundary values together with an $L^p$ non-tangential maximal estimate imply Gabriel-type inequalities through the Carleson-measure geometry of convex curves.
\item For mappings with log-subharmonic modulus, the Lozi\'nski majorant theorem transfers Gabriel's sharp analytic constant $2$ to the corresponding quasiregular subclass.
\item For mappings represented by Stoilow factorization, we prove boundary-norm estimates under explicit boundary-weight and cone-distortion hypotheses, thereby identifying sufficient mechanisms for transporting analytic estimates through a quasiconformal change of variables.
\end{enumerate}
These statements separate the quantitative role of the dilatation in Theorem \ref{thm:main} from the general boundary-control mechanisms that operate in wider quasiregular classes.

Gabriel's problem is also related to Riesz-type inequalities for quasiregular functions. When the convex curve is specialized to a concentric circle, the resulting estimates are connected with classical circle-mean inequalities in quasiregular Hardy spaces; see \cite{Kalaj2025}.

Sections~2 and~3 develop the Carleson and maximal-function framework. Section~4 treats the log-subharmonic modulus case. Section~\ref{sec:harmonic-main} proves Theorem \ref{thm:main}. Section~6 gives the Stoilow criteria, and Section~7 lists open problems.

\section{Carleson geometry of convex curves}

For an arc $I\subset\T$, let $|I|$ be its arclength and define the Carleson box
\[
   S(I)=\left\{re^{it}: e^{it}\in I,
   \ 1-\frac{|I|}{2\pi}\le r<1\right\}.
\]
A finite positive Borel measure $\mu$ on $\D$ is a Carleson measure if
\[
   \|\mu\|_{\Car}:=\sup_{I\subset\T}\frac{\mu(S(I))}{|I|}<\infty.
\]
Throughout the paper, a convex curve means a simple rectifiable arc, or a closed rectifiable curve, contained in the boundary of a convex subset of $\D$; line segments are allowed. This convention excludes repeated parametrizations, since the relevant measure is arclength on the trace.

\begin{lemma}[Arclength on convex curves is Carleson]\label{lem:convex-carleson}
There is an absolute constant $C_0$ such that, for every convex curve $\Gamma\subset\D$, the measure
\[
   d\mu_\Gamma=\ds|_\Gamma
\]
is Carleson and
\[
   \|\mu_\Gamma\|_{\Car}\le C_0.
\]
\end{lemma}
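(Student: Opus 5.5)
The plan is to reduce to a purely Euclidean fact: the length of a convex curve inside a convex set is at most the perimeter of that set. For an arc $I\subset\T$ with $|I|\ge \pi$ (say), the bound is trivial. Indeed, $\Gamma$ lies on the boundary of a convex subset of $\D$. That subset's boundary has length at most $2\pi$ by the perimeter monotonicity of convex sets, here applied with the disc's perimeter. Hence $\mu_\Gamma(S(I))\le \ell(\Gamma)\le 2\pi\le 2|I|$. So it suffices to treat $|I|<\pi$.

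For short arcs, I would enclose $S(I)$ in a convex set $Q_I$ of diameter comparable to $|I|$. If $I$ is centered at $e^{i\theta_0}$ with half-angle $\delta=|I|/2$, then every point $re^{it}$ of $S(I)$ satisfies $|t-\theta_0|\le\delta$ and $1-r\le |I|/(2\pi)<\delta$. Hence $|re^{it}-e^{i\theta_0}|\le (1-r)+|t-\theta_0|\le 2\delta$, and so $S(I)\subset B_I:=D(e^{i\theta_0},|I|)$. The disc $B_I$ is convex with perimeter $2\pi|I|$.

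The key step is then the following claim: if $\Omega$ is convex and $B$ is a convex set (here a disc), then $\ell(\partial\Omega\cap B)\le \ell(\partial B)$. I would prove it via the convex set $\Omega\cap B$. Its boundary contains $\partial\Omega\cap B$ (the relative interior part), and its perimeter is at most $\ell(\partial B)$ by monotonicity of perimeter under inclusion of convex sets. That monotonicity I would justify by the Cauchy projection formula $\ell(\partial C)=\int_0^{\pi}|\pi_\theta(C)|\,d\theta$, since projections are monotone in $C$. For a convex \emph{arc} or a closed curve on $\partial\Omega$, the trace is a subset of $\partial\Omega$, so the same bound applies. Degenerate cases, such as segments where $\Omega$ has empty interior, are handled by the Crofton argument directly: a segment's length is half the perimeter of the degenerate convex set, so the bound still holds with an extra factor $\le1$ after interpretation. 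Alternatively, one can thicken slightly and take a limit.

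Combining the cases gives $\mu_\Gamma(S(I))\le \ell(\partial(\Omega\cap B_I))\le 2\pi|I|$, so $C_0=2\pi$ works for every convex curve. I expect the main obstacle to be making the monotonicity step rigorous under the paper's loose definition of ``convex curve''. Two issues need care: that arclength on the trace of $\Gamma\cap S(I)$ is dominated by the perimeter of $\Omega\cap B_I$, and the degenerate lower-dimensional $\Omega$. I would handle both with the Crofton/Cauchy formula, which counts intersections with lines. A convex set's boundary meets a.e.\ line in at most $2$ points, and $\Gamma\cap B_I$ likewise meets it in at most $2$ points. Both sides are then integrals of intersection counts over lines that hit $B_I$, and this yields the bound uniformly without any case analysis.
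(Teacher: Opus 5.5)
Your proof is correct and follows essentially the same route as the paper. Long arcs are handled by the total-length bound, and short arcs by enclosing $S(I)$ in a disc of radius comparable to $|I|$ and using monotonicity of perimeter via Cauchy's projection formula. The only differences are minor: you apply monotonicity to $\Omega\cap B_I$ where the paper uses the convex hull of $\Gamma\cap B_I$, and you track constants to obtain the explicit value $C_0=2\pi$.
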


\begin{proof}
Let $I\subset\T$. If $|I|\ge1$, then
\[
   \mu_\Gamma(S(I))\le \ell(\Gamma)\le 2\pi,
\]
since the perimeter of a convex set contained in the unit disk is at most the perimeter of the unit disk. Hence $\mu_\Gamma(S(I))\le 2\pi |I|$ in this case.

Suppose now that $0<|I|<1$. The box $S(I)$ is contained in a Euclidean disk $B(\xi_I,c|I|)$, where $\xi_I$ is the midpoint of $I$ and $c$ is absolute. We use the elementary fact that the length of a convex curve contained in a disk of radius $R$ is bounded by $2\pi R$. More generally, the part of a convex curve lying in such a disk has length at most $C R$ with an absolute constant $C$; this follows by applying Cauchy's projection formula to the convex hull of that part. Consequently
\[
   \mu_\Gamma(S(I))\le Cc|I|.
\]
The two estimates give the desired uniform Carleson bound.
\end{proof}

\begin{remark}
No attempt is made to optimize $C_0$. The sharp constant in Gabriel's analytic theorem does not follow from this Carleson-measure argument; the lemma is used only to obtain robust finite bounds in non-sharp quasiregular settings.
\end{remark}

Fix $\alpha>1$ and define the Stolz cone
\[
   \Gamma_\alpha(\xi)=\{z\in\D: |z-\xi|<\alpha(1-|z|)\},
   \qquad \xi\in\T.
\]
For a function $u$ on $\D$, set
\[
   N_\alpha u(\xi)=\sup_{z\in\Gamma_\alpha(\xi)} |u(z)|.
\]

\begin{lemma}[Carleson embedding through non-tangential maximal functions]\label{lem:max-embed}
Let $0<p<\infty$ and $\alpha>1$. There exists $C_\alpha<\infty$ such that, for every Carleson measure $\mu$ on $\D$ and every measurable function $u$ on $\D$,
\begin{equation}\label{eq:max-embed}
   \int_\D |u(z)|^p\,d\mu(z)
   \le C_\alpha\|\mu\|_{\Car}
      \int_\T N_\alpha u(\xi)^p\,|d\xi|.
\end{equation}
\end{lemma}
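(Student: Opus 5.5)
The plan is the standard layer-cake (tent-space) argument. Fix $\lambda>0$ and consider the open set
\[
   E_\lambda=\{\xi\in\T: N_\alpha u(\xi)>\lambda\}.
\]
It is open because $N_\alpha u$ is lower semicontinuous: each $z$ lies in $\Gamma_\alpha(\xi)$ for an open set of $\xi$. By the layer-cake formula,
\[
   \int_\D |u|^p\,d\mu=\int_0^\infty p\lambda^{p-1}\mu(\{|u|>\lambda\})\,d\lambda,
\]
and similarly $\int_\T N_\alpha u^p\,|d\xi|=\int_0^\infty p\lambda^{p-1}|E_\lambda|\,d\lambda$. It therefore suffices to prove the distributional estimate
\[
   \mu(\{z\in\D:|u(z)|>\lambda\})\le C_\alpha\|\mu\|_{\Car}\,|E_\lambda|
\]
for every $\lambda>0$. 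Since $u$ is only measurable, the set $\{|u|>\lambda\}$ is measurable, which is all the formula requires. If $E_\lambda=\T$, the estimate follows from $\mu(\D)\le C\|\mu\|_{\Car}$, obtained by covering $\D$ with boxes over a few arcs of length comparable to $1$ (e.g.\ $I=\T$, $S(\T)=\D$).

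\textbf{Geometric step.} If $|u(z)|>\lambda$, then every $\xi$ with $z\in\Gamma_\alpha(\xi)$ lies in $E_\lambda$. Since $\alpha>1$, the set $\{\xi: z\in\Gamma_\alpha(\xi)\}$ contains the arc $J_z$ centred at $z/|z|$ of length $c_\alpha(1-|z|)$, with $c_\alpha>0$; for $z$ near $0$ this is all of $\T$ once $\alpha$ is large relative to $1$, and in any case it has length comparable to $1-|z|$. Hence $J_z\subset E_\lambda$. Decompose the proper open set $E_\lambda=\bigcup_j I_j$ into disjoint open arcs. Then $J_z$ lies in a single component $I_j$, so $z/|z|\in I_j$ and $1-|z|\le |I_j|/c_\alpha$. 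Consequently $z$ lies in the enlarged box $S(\tilde I_j)$, where $\tilde I_j$ is the arc with the same centre as $I_j$ and length $M_\alpha|I_j|$, with $M_\alpha=\max(1,2\pi/c_\alpha)$ (capped at $\T$). This gives
\[
   \{|u|>\lambda\}\subset\bigcup_j S(\tilde I_j),
\]
and therefore
\[
   \mu(\{|u|>\lambda\})\le\sum_j\|\mu\|_{\Car}|\tilde I_j|\le M_\alpha\|\mu\|_{\Car}\sum_j|I_j|=M_\alpha\|\mu\|_{\Car}|E_\lambda|.
\]
Integrating against $p\lambda^{p-1}\,d\lambda$ then yields \eqref{eq:max-embed} with $C_\alpha$ depending only on $\alpha$.

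\textbf{Main obstacle.} The delicate point is the geometric inclusion: checking that the cone condition $|z-\xi|<\alpha(1-|z|)$ produces an arc $J_z$ of length $\gtrsim_\alpha 1-|z|$, uniformly in $z$. I would verify this by taking $\xi$ within angle $\delta(1-|z|)$ of $z/|z|$, so that $|z-\xi|\le(1-|z|)+\delta(1-|z|)$; this is $<\alpha(1-|z|)$ if $\delta<\alpha-1$. The remaining issue is bookkeeping for the radial depth of $S(\cdot)$, which is $|I|/2\pi$: this is absorbed into the dilation factor $M_\alpha$.
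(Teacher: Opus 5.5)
Your proposal is correct and takes essentially the same route as the paper. The layer-cake formula reduces the lemma to the distributional bound $\mu(\{|u|>\lambda\})\le C_\alpha\|\mu\|_{\Car}|E_\lambda|$, and both arguments get it by noting that $\{|u|>\lambda\}$ lies in the tent over $E_\lambda$. You make explicit the covering step the paper leaves implicit: splitting $E_\lambda$ into component arcs and covering the tent by dilated Carleson boxes. Your use of $S(\T)=\D$ when $E_\lambda=\T$ is genuinely needed, since boxes over proper arcs never contain the origin.
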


\begin{proof}
Let $E_t=\{\xi\in\T:N_\alpha u(\xi)>t\}$. If $|u(z)|>t$, then $z$ belongs to the tent over $E_t$ associated with the aperture $\alpha$. The Carleson condition gives
\[
   \mu(\{z:|u(z)|>t\})
   \le C_\alpha\|\mu\|_{\Car}|E_t|.
\]
Using the layer-cake formula,
\begin{align*}
   \int_\D |u|^p\,d\mu
   &=p\int_0^\infty t^{p-1}\mu(\{|u|>t\})\,dt \\
   &\le C_\alpha\|\mu\|_{\Car}
      p\int_0^\infty t^{p-1}|E_t|\,dt \\
   &=C_\alpha\|\mu\|_{\Car}
      \int_\T N_\alpha u(\xi)^p\,|d\xi|.
\end{align*}
\end{proof}

Lemma \ref{lem:convex-carleson} and Lemma \ref{lem:max-embed} immediately imply the following curve estimate.

\begin{corollary}\label{cor:curve-max}
For every $0<p<\infty$ and every $\alpha>1$ there is a constant $B_\alpha<\infty$ such that, for every convex curve $\Gamma\subset\D$ and every measurable function $u$ on $\D$,
\begin{equation}\label{eq:curve-max}
   \int_\Gamma |u(z)|^p\ds(z)
   \le B_\alpha\int_\T N_\alpha u(\xi)^p\,|d\xi|.
\end{equation}
\end{corollary}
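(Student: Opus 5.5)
The plan is to apply Lemma~\ref{lem:max-embed} with $\mu=\mu_\Gamma$, the arclength measure on $\Gamma$, and then bound the Carleson norm by Lemma~\ref{lem:convex-carleson}. The constant that comes out is $B_\alpha=C_\alpha C_0$. It depends only on $\alpha$ and $p$ and not on $\Gamma$ or $u$, since $C_0$ is absolute.

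First, the left-hand side of \eqref{eq:curve-max} has to be written as an integral against a measure on $\D$. Because $\Gamma$ is a rectifiable arc or closed curve traversed once (repeated parametrizations are excluded by convention), $\mu_\Gamma$ is a finite positive Borel measure on $\D$. It has total mass $\ell(\Gamma)\le 2\pi$, and
\[
   \int_\Gamma |u(z)|^p\ds(z)=\int_\D |u(z)|^p\,d\mu_\Gamma(z).
\]
Lemma~\ref{lem:convex-carleson} gives $\|\mu_\Gamma\|_{\Car}\le C_0$. Lemma~\ref{lem:max-embed} applies to every Carleson measure and every measurable $u$, so it yields
\[
   \int_\Gamma |u|^p\ds\le C_\alpha\|\mu_\Gamma\|_{\Car}\int_\T N_\alpha u(\xi)^p\,|d\xi|
   \le C_\alpha C_0\int_\T N_\alpha u(\xi)^p\,|d\xi|.
\]

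The only point needing care is measurability. The restriction of $u$ to $\Gamma$ is measurable with respect to arclength, since arclength on the trace is the pushforward of Lebesgue measure under a Borel parametrization. The function $N_\alpha u$ is a supremum over an open cone, and it should be read in the same sense as in Lemma~\ref{lem:max-embed}: either it is lower semicontinuous, when $|u|$ is, or the integral is an upper integral. No step is a genuine obstacle; the corollary is a direct composition of the two lemmas.
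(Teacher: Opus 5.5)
Your proposal is correct and matches the paper's argument: the paper presents the corollary as an immediate consequence of applying Lemma~\ref{lem:max-embed} with $\mu=\mu_\Gamma$ and bounding $\|\mu_\Gamma\|_{\Car}\le C_0$ by Lemma~\ref{lem:convex-carleson}, which gives $B_\alpha=C_\alpha C_0$. Your hedge about measurability is unnecessary: for each fixed $z$ the set $\{\xi\in\T: z\in\Gamma_\alpha(\xi)\}$ is open, so $\{N_\alpha u>t\}$ is a union of open subsets of $\T$, and hence $N_\alpha u$ is lower semicontinuous for every $u$.
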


\section{A maximal-function formulation for quasiregular classes}

The preceding section gives a simple general principle: to prove a Gabriel inequality, it is enough to control a non-tangential maximal function by the boundary function.

\begin{definition}\label{def:admissible}
Let $0<p<\infty$, $K\ge1$, $\alpha>1$, and $M<\infty$. We say that a $K$-quasiregular mapping $f:\D\to\C$ belongs to
\[
   \mathcal G^p_{K,\alpha}(M)
\]
if the following hold:
\begin{enumerate}[label=(\roman*)]
\item $f$ has a non-tangential boundary limit
\[
   f^*(\xi)=\lim_{z\to\xi,\,z\in\Gamma_\alpha(\xi)}f(z)
\]
for a.e. $\xi\in\T$;
\item $f^*\in L^p(\T)$;
\item the maximal estimate
\begin{equation}\label{eq:admissible-max}
   \|N_\alpha f\|_{L^p(\T)}
   \le M\|f^*\|_{L^p(\T)}
\end{equation}
holds.
\end{enumerate}
\end{definition}

\begin{theorem}[Maximal-function Gabriel principle]\label{thm:admissible}
Let $0<p<\infty$, $K\ge1$, $\alpha>1$, and $M<\infty$. If $f\in\mathcal G^p_{K,\alpha}(M)$, then every convex curve $\Gamma\subset\D$ satisfies
\begin{equation}\label{eq:admissible-gabriel}
   \int_\Gamma |f(z)|^p\ds(z)
   \le B_\alpha M^p
      \int_\T |f^*(\xi)|^p\,|d\xi|,
\end{equation}
where $B_\alpha$ is the constant in Corollary \ref{cor:curve-max}. More generally, if $\mu$ is a Carleson measure on $\D$, then
\begin{equation}\label{eq:admissible-carleson}
   \int_\D |f(z)|^p\,d\mu(z)
   \le C_\alpha\|\mu\|_{\Car}M^p
      \int_\T |f^*(\xi)|^p\,|d\xi|.
\end{equation}
\end{theorem}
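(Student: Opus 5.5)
The plan is to combine the Carleson embedding of Lemma~\ref{lem:max-embed} (equivalently Corollary~\ref{cor:curve-max}) with the defining maximal estimate \eqref{eq:admissible-max}. Nothing in the argument uses quasiregularity beyond membership in $\mathcal G^p_{K,\alpha}(M)$. I would prove the general Carleson statement \eqref{eq:admissible-carleson} first and then specialize to curves.

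\textbf{Step 1 (measurability).} Apply Lemma~\ref{lem:max-embed} with $u=f$. For this we need $f$ measurable on $\D$ and $N_\alpha f$ measurable on $\T$. Since $f$ is quasiregular, it is continuous, because every $W^{1,2}_{\mathrm{loc}}$ mapping of bounded distortion has a continuous representative. Continuity of $f$, together with the fact that $\Gamma_\alpha(\xi)$ is open, makes $N_\alpha f$ lower semicontinuous on $\T$. Indeed, if $|f(z)|>t$ for some $z\in\Gamma_\alpha(\xi)$, then $z\in\Gamma_\alpha(\xi')$ for all $\xi'$ near $\xi$. Hence the sets $E_t$ appearing in the proof of Lemma~\ref{lem:max-embed} are open, and the layer-cake computation is legitimate.

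\textbf{Step 2 (Carleson estimate).} Lemma~\ref{lem:max-embed} gives
\[
  \int_\D |f|^p\,d\mu\le C_\alpha\|\mu\|_{\Car}\int_\T N_\alpha f(\xi)^p\,|d\xi| .
\]
Raising \eqref{eq:admissible-max} to the $p$-th power gives $\int_\T (N_\alpha f)^p\,|d\xi|\le M^p\int_\T|f^*|^p\,|d\xi|$, and substituting this yields \eqref{eq:admissible-carleson}. If $\|\mu\|_{\Car}=\infty$, or if the right-hand side is infinite, there is nothing to prove. Condition (ii) guarantees that the right-hand side is finite.

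\textbf{Step 3 (convex curves).} Take $\mu=\mu_\Gamma$, which is Carleson with $\|\mu_\Gamma\|_{\Car}\le C_0$ by Lemma~\ref{lem:convex-carleson}. Alternatively, quote Corollary~\ref{cor:curve-max} directly with $B_\alpha=C_\alpha C_0$. Either route gives \eqref{eq:admissible-gabriel} with the constant $B_\alpha M^p$.

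\textbf{Main obstacle.} The only substantive point is ensuring that Lemma~\ref{lem:max-embed} applies as stated, namely the tent inclusion $\{|f|>t\}\subset T_\alpha(E_t)$ and the bound $\mu(T_\alpha(E_t))\le C_\alpha\|\mu\|_{\Car}|E_t|$. The latter requires $E_t$ to be open, so that it can be decomposed into countably many arcs. Step~1 provides exactly that openness. Hypothesis (i) is not used quantitatively; it only guarantees that $f^*$ is defined, so that the right-hand side makes sense.
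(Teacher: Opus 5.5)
Your proof is correct and is essentially the paper's argument: apply Lemma~\ref{lem:max-embed} with $u=f$, insert the maximal estimate \eqref{eq:admissible-max}, and then specialize to $\mu=\ds|_\Gamma$ using Lemma~\ref{lem:convex-carleson}. Your Step~1 is a harmless addition. The openness of $\{N_\alpha f>t\}$ in fact holds for any pointwise-defined $f$, because each shadow $\{\xi: z\in\Gamma_\alpha(\xi)\}$ is open, so continuity of $f$ is needed only to fix the representative on which $N_\alpha f$ is computed.
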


\begin{proof}
Apply Lemma \ref{lem:max-embed} to $u=f$ and then use \eqref{eq:admissible-max}. This gives \eqref{eq:admissible-carleson}. Taking $\mu=\ds|_\Gamma$ and using Lemma \ref{lem:convex-carleson} gives \eqref{eq:admissible-gabriel}.
\end{proof}

\begin{remark}\label{rem:scope}
Theorem \ref{thm:admissible} isolates a precise reduction from a boundary maximal estimate to a Gabriel-type inequality. The required maximal estimate may follow from additional structure of the mapping class or from an independent boundary theorem. In particular, the radial condition \eqref{eq:radial-class} alone does not generally provide almost everywhere boundary values or non-tangential maximal control.
\end{remark}

\section{The log-subharmonic modulus case}

A non-negative function $\Phi$ is called log-subharmonic if $\log\Phi$ is subharmonic, with the convention $\log0=-\infty$.

\begin{theorem}\label{thm:log-subharmonic}
Let $0<p<\infty$, and let $f$ be a mapping in $\D$ such that $\Phi=|f|$ is log-subharmonic and
\begin{equation}\label{eq:Phi-Hp}
   \sup_{0<r<1}\int_\T \Phi(r\zeta)^p\,|d\zeta|<\infty.
\end{equation}
Then every convex curve $\Gamma\subset\D$ satisfies
\begin{equation}\label{eq:log-sub-gabriel}
   \int_\Gamma |f(z)|^p\ds(z)
   \le 2\int_\T \Phi^*(\zeta)^p\,|d\zeta|,
\end{equation}
where $\Phi^*$ is the radial boundary function supplied by the Lozi\'nski majorant theorem. If $f$ itself has radial boundary values a.e., then
\begin{equation}\label{eq:log-sub-gabriel-boundary}
   \int_\Gamma |f(z)|^p\ds(z)
   \le 2\int_\T |f^*(\zeta)|^p\,|d\zeta|.
\end{equation}
The constant $2$ is best possible for any subclass containing the analytic Hardy space $H^p$.
\end{theorem}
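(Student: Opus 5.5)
The plan is to dominate $\Phi^p$ by $|F|^p$ for an analytic $F\in H^p$, using the Lozi\'nski majorant theorem, and then apply Gabriel's inequality \eqref{eq:Gabriel} to $F$. Since $\log\Phi$ is subharmonic, so is $p\log\Phi=\log\Phi^p$, hence $\Phi^p$ is log-subharmonic. Hypothesis \eqref{eq:Phi-Hp} says the circle means of $\Phi^p$ are bounded, so $\Phi^p$ lies in the log-subharmonic Hardy class of exponent $1$. Lozi\'nski's theorem then provides an outer function $F\in H^p$ with $\Phi\le|F|$ in $\D$. It also gives radial limits $\Phi^*$ a.e. with $|F^*|=\Phi^*$ a.e. on $\T$.

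I expect obtaining exactly this to be the main point to check. The standard form of the theorem says that $\Phi^p$ has a least harmonic majorant $u$, with $\Phi^p\le u$. Moreover $\log\Phi^p\le$ the Poisson integral of $\log(\Phi^*)^p$, where $\Phi^*$ is the a.e. radial limit and $\log\Phi^*\in L^1$ unless $\Phi\equiv0$. Given this, I would set
\[
F(z)=\exp\Bigl(\int_\T \frac{\zeta+z}{\zeta-z}\log\Phi^*(\zeta)\,\frac{|d\zeta|}{2\pi}\Bigr).
\]
Then $\log|F|$ is the Poisson integral of $\log\Phi^*$, so $\log\Phi\le\log|F|$. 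The bound $|F|^p\le P[(\Phi^*)^p]$ follows from Jensen's inequality, which shows $F\in H^p$ with $\|F\|_{H^p}^p=\int_\T(\Phi^*)^p|d\zeta|$. The degenerate case $\Phi\equiv0$ is trivial.

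With $F$ in hand, every convex $\Gamma$ satisfies
\[
\int_\Gamma|f|^p\ds=\int_\Gamma\Phi^p\ds\le\int_\Gamma|F|^p\ds\le2\int_\T|F^*|^p|d\zeta|=2\int_\T(\Phi^*)^p|d\zeta|,
\]
by Gabriel's theorem \eqref{eq:Gabriel}, which gives \eqref{eq:log-sub-gabriel}. If $f$ has radial limits $f^*$ a.e., continuity of the modulus gives $\Phi^*=|f^*|$ a.e., and \eqref{eq:log-sub-gabriel-boundary} follows.

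For sharpness, note that if $f\in H^p$ then $|f|$ is log-subharmonic and satisfies \eqref{eq:Phi-Hp}. So any constant valid for a subclass containing $H^p$ is also valid for $H^p$ itself, and the sharpness of $2$ in Gabriel's theorem (\cite{Gabriel1928,Duren1970}) shows no smaller constant works.
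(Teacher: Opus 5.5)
Your proposal is correct and follows the paper's proof: you take the Lozi\'nski majorant $F\in H^p$ with $\Phi\le|F|$ and $|F^*|=\Phi^*$ a.e., apply Gabriel's inequality to $F$, and get sharpness because $H^p$ lies in the class. Your explicit construction of $F$ as the outer function with modulus $\Phi^*$ (via $\log\Phi\le P[\log\Phi^*]$ and Jensen's inequality) simply unpacks the same cited theorem and changes nothing essential.
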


\begin{proof}
By the Lozi\'nski majorant theorem \cite{Lozinski1944}, there exists an analytic function $F\in H^p$ such that
\[
   \Phi(z)\le |F(z)| \qquad (z\in\D)
\]
and
\[
   \Phi^*(\zeta)=|F^*(\zeta)| \qquad \text{for a.e. }\zeta\in\T.
\]
Gabriel's theorem gives
\begin{align*}
   \int_\Gamma |f(z)|^p\ds(z)
   &=\int_\Gamma \Phi(z)^p\ds(z) \\
   &\le \int_\Gamma |F(z)|^p\ds(z) \\
   &\le 2\int_\T |F^*(\zeta)|^p\,|d\zeta| \\
   &=2\int_\T \Phi^*(\zeta)^p\,|d\zeta|.
\end{align*}
If $f^*$ exists a.e., then $\Phi^*=|f^*|$ a.e. Sharpness follows from the sharpness of Gabriel's analytic theorem.
\end{proof}

\begin{corollary}\label{cor:qr-log-sub}
Let $0<p<\infty$ and $K\ge1$. Suppose that $f$ is $K$-quasiregular in $\D$, satisfies \eqref{eq:Phi-Hp} with $\Phi=|f|$, has radial boundary values a.e., and $\log|f|$ is subharmonic. Then every convex curve $\Gamma\subset\D$ satisfies
\[
   \int_\Gamma |f(z)|^p\ds(z)
   \le 2\int_\T |f^*(\zeta)|^p\,|d\zeta|.
\]
The constant $2$ is sharp in any such class that contains all analytic $H^p$ functions.
\end{corollary}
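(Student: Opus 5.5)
This follows directly from Theorem~\ref{thm:log-subharmonic}: I will check that its hypotheses hold and then identify the boundary function. Put $\Phi=|f|$. By assumption $\log\Phi=\log|f|$ is subharmonic, with the convention $\log 0=-\infty$ covering the zeros of $f$, so $\Phi$ is log-subharmonic. Condition \eqref{eq:Phi-Hp} is also among the hypotheses. Theorem~\ref{thm:log-subharmonic} therefore applies and gives \eqref{eq:log-sub-gabriel} with the Lozi\'nski boundary function $\Phi^*$.

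The next step is to replace $\Phi^*$ by $|f^*|$. Because $f$ has radial boundary values a.e., continuity of the modulus gives
\[
   \lim_{r\to1}|f(r\zeta)|=|f^*(\zeta)|
\]
for a.e.\ $\zeta$. The function $\Phi^*$ in the proof of Theorem~\ref{thm:log-subharmonic} is the radial limit of $\Phi$. So wherever both limits exist, which is a.e., we get $\Phi^*=|f^*|$. This is exactly the second assertion \eqref{eq:log-sub-gabriel-boundary} of that theorem, which yields the inequality with constant $2$.

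The only delicate point is making sure that $\Phi^*$ really is the radial limit of $\Phi$, and not merely the boundary modulus $|F^*|$ of the majorant $F$. If that identification were in doubt, I would argue directly instead. First, $\Phi\le|F|$ on all of $\D$, so $\Phi^*\le|F^*|$ a.e. In the other direction, the least harmonic majorant of $\Phi^p$ is the Poisson integral of $(\Phi^*)^p$, and this coincides with that of $|F|^p$. Either way, the inequality one needs only uses $|F^*|=\Phi^*=|f^*|$ a.e., and that is supplied by the Lozi\'nski theorem as quoted.

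For sharpness, suppose a class satisfying the corollary's hypotheses contains all of $H^p$. Analytic functions are $1$-quasiregular, hence $K$-quasiregular for every $K\ge1$. If $f\in H^p$, then $\log|f|$ is subharmonic and $f$ has radial limits a.e. So any constant valid for the class would also be valid in Gabriel's inequality \eqref{eq:Gabriel}. Since the constant $2$ there is sharp, no smaller constant can work for the class.
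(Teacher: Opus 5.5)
Your proposal is correct and follows the paper: the corollary is the second assertion \eqref{eq:log-sub-gabriel-boundary} of Theorem~\ref{thm:log-subharmonic}, where quasiregularity plays no role, and sharpness is inherited from Gabriel's theorem because analytic $H^p$ functions satisfy all the hypotheses. Your backup paragraph on the ``delicate point'' is not needed, and its claim that the least harmonic majorants of $\Phi^p$ and $|F|^p$ coincide just restates $\Phi^*=|F^*|$ a.e., so it is not an independent argument; your main line does not depend on it.
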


\begin{remark}
No quasiregularity is used in Theorem \ref{thm:log-subharmonic}. The point of Corollary \ref{cor:qr-log-sub} is that the quasiregular Gabriel problem has a sharp answer on the special subclass for which the modulus admits an analytic majorant with the same boundary modulus. Related uses of Lozi\'nski-type majorants in isoperimetric inequalities for subharmonic or harmonic functions appear in \cite{Mateljevic2015,MateljevicPurtic2025}.
\end{remark}

\section{Harmonic quasiregular mappings and proof of the main theorem}\label{sec:harmonic-main}

Let $f=h+\overline g$ be a complex-valued harmonic mapping in $\D$, where $h$ and $g$ are analytic. If $f$ is sense-preserving and
\[
   |g'(z)|\le k|h'(z)|, \qquad 0\le k<1,
\]
then $f$ is $K$-quasiregular with
\[
   K=\frac{1+k}{1-k}.
\]
For comparison, Das's theorem immediately gives, for every harmonic $K$-quasiregular mapping $f\in h^p$ with $p>1$,
\begin{equation}\label{eq:Das-harmonic-qr}
   \int_\Gamma |f(z)|^p\ds(z)
   \le A_h(p)\int_\T |f^*(\zeta)|^p\,|d\zeta|
\end{equation}
for every convex curve $\Gamma\subset\D$, where $A_h(p)$ is defined in \eqref{eq:Ah}. The estimate \eqref{eq:Das-harmonic-qr} is the restriction of the general harmonic result to this subclass and serves as the baseline for the distortion-dependent improvement at $p=2$.

\begin{proof}[Proof of Theorem \ref{thm:main}]
Replace $g$ by $g-g(0)$ and $h$ by $h+\overline{g(0)}$. This leaves $f=h+\overline g$ and the derivatives unchanged and gives the normalization $g(0)=0$. Since $f\in h^2$, the normalized analytic parts $h$ and $g$ belong to $H^2$.

Set
\[
   H=\int_\T |h^*(\zeta)|^2\,|d\zeta|,
   \qquad
   G=\int_\T |g^*(\zeta)|^2\,|d\zeta|.
\]
The Hardy--Stein identity gives
\[
   G=4\int_\D |g'(z)|^2\log\frac1{|z|}\,dA(z),
\]
where $dA$ denotes planar area measure, and
\[
   H-2\pi |h(0)|^2
   =4\int_\D |h'(z)|^2\log\frac1{|z|}\,dA(z).
\]
Using $|g'|\le k|h'|$, we obtain
\begin{equation}\label{eq:G-kH}
   G\le k^2\bigl(H-2\pi |h(0)|^2\bigr)\le k^2H.
\end{equation}
Moreover,
\begin{equation}\label{eq:boundary-splitting}
   \int_\T |f^*(\zeta)|^2\,|d\zeta|=H+G.
\end{equation}
Indeed, the cross term vanishes because
\[
   2\operatorname{Re}\int_\T h^*(\zeta)g^*(\zeta)\,|d\zeta|
   =4\pi\operatorname{Re}\bigl(h(0)g(0)\bigr)=0.
\]

Gabriel's theorem applied separately to $h$ and $g$ yields
\[
   \int_\Gamma |h|^2\ds\le 2H,
   \qquad
   \int_\Gamma |g|^2\ds\le 2G.
\]
Consequently, by the triangle inequality and Cauchy--Schwarz,
\begin{align*}
   \int_\Gamma |f|^2\ds
   &\le \int_\Gamma (|h|+|g|)^2\ds \\
   &\le 2H+2G
      +2\left(\int_\Gamma |h|^2\ds\right)^{1/2}
        \left(\int_\Gamma |g|^2\ds\right)^{1/2} \\
   &\le 2(\sqrt H+\sqrt G)^2.
\end{align*}
If $H=0$, then $h\equiv0$, and the derivative inequality forces $g$ to be constant; the normalization gives $g\equiv0$, so the conclusion is immediate. Assume $H>0$ and put $t=\sqrt{G/H}$. By \eqref{eq:G-kH}, $0\le t\le k<1$. Since
\[
   t\longmapsto \frac{(1+t)^2}{1+t^2}
\]
is increasing on $[0,1]$, equations \eqref{eq:boundary-splitting} and the preceding curve estimate give
\[
   \frac{\displaystyle\int_\Gamma |f|^2\ds}
        {\displaystyle\int_\T |f^*|^2\,|d\zeta|}
   \le 2\frac{(1+t)^2}{1+t^2}
   \le 2\frac{(1+k)^2}{1+k^2}.
\]
This proves \eqref{eq:main-k}. Finally, the identity
\[
   2\frac{(1+k)^2}{1+k^2}
   =\frac{4K^2}{K^2+1},
   \qquad K=\frac{1+k}{1-k},
\]
gives \eqref{eq:main-K}.
\end{proof}

\begin{remark}[Where the dilatation enters]\label{rem:dilatation}
The improvement in Theorem \ref{thm:main} is not obtained by merely restricting Das's inequality to the quasiregular subclass. The dilatation condition first yields the energy comparison $G\le k^2H$, while the normalization $g(0)=0$ gives the exact identity \eqref{eq:boundary-splitting}. Retaining both pieces of information produces the factor $2(1+k)^2/(1+k^2)$ and hence the explicit dependence on $K$.
\end{remark}

\begin{remark}[Comparison with circle estimates]
When $\Gamma$ is a circle contained in $\D$, Das's circle theorem gives sharper specialized estimates. Those statements pass unchanged to the harmonic quasiregular subclass and are therefore not restated as separate results here; see \cite[Theorem~4]{Das2025}.
\end{remark}

\section{Stoilow factorizations with boundary control}

Every non-constant planar quasiregular mapping admits a Stoilow factorization
\begin{equation}\label{eq:Stoilow}
   f=g\circ\phi,
\end{equation}
where $g$ is analytic and $\phi$ is quasiconformal. In the disk setting, after normalization, one may take $\phi:\D\to\D$. To obtain a boundary-norm Gabriel inequality from \eqref{eq:Stoilow}, one needs quantitative control of the boundary map
\[
   \psi=\phi|_\T.
\]
The following result states this control explicitly.

\begin{definition}\label{def:weight}
Let $\psi:\T\to\T$ be an orientation-preserving homeomorphism. Suppose that the push-forward measure $\psi_*(|d\xi|)$ is absolutely continuous with respect to arclength. Its density $w_\psi$ is defined by
\[
   \int_\T H(\eta)w_\psi(\eta)\,|d\eta|
   =\int_\T H(\psi(\xi))\,|d\xi|
\]
for all non-negative Borel functions $H$.
For $1<p<\infty$, we say that $w_\psi\in A_p(\T)$ if
\[
   [w_\psi]_{A_p}
   :=\sup_{I\subset\T}
   \left(\frac1{|I|}\int_I w_\psi\,|d\xi|\right)
   \left(\frac1{|I|}\int_I w_\psi^{-1/(p-1)}\,|d\xi|\right)^{p-1}<\infty.
\]
\end{definition}

\begin{theorem}[Weighted Stoilow criterion]\label{thm:weighted-stoilow}
Let $1<p<\infty$ and $\alpha>1$. Suppose
\[
   f=g\circ\phi,
\]
where $g\in H^p$, $\phi:\D\to\D$ is quasiconformal, and $\psi=\phi|_\T$ has boundary weight $w_\psi\in A_p(\T)$. Assume also that $g^*\in L^p(w_\psi|d\zeta|)$ and that, for some $\beta>1$,
\begin{equation}\label{eq:cone}
   \phi(\Gamma_\alpha(\xi))\subset \Gamma_\beta(\psi(\xi))
   \qquad (\xi\in\T).
\end{equation}
Then $f$ has non-tangential boundary values
\[
   f^*(\xi)=g^*(\psi(\xi))
\]
for a.e. $\xi\in\T$, and every convex curve $\Gamma\subset\D$ satisfies
\begin{equation}\label{eq:weighted-stoilow}
   \int_\Gamma |f(z)|^p\ds(z)
   \le B_\alpha C_{p,\beta}([w_\psi]_{A_p})
      \int_\T |f^*(\xi)|^p\,|d\xi|.
\end{equation}
\end{theorem}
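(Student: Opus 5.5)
The plan is to reduce everything to Theorem \ref{thm:admissible}. It suffices to show that $f\in\mathcal G^p_{K,\alpha}(M)$ with $M^p=C_{p,\beta}([w_\psi]_{A_p})$. That requires three facts: non-tangential boundary values, $f^*\in L^p(\T)$, and the maximal bound $\|N_\alpha f\|_{L^p}^p\le C_{p,\beta}\|f^*\|_{L^p}^p$.

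\textbf{Boundary values.} For $g\in H^p$, Fatou's theorem says $g$ has non-tangential limits $g^*(\eta)$ through every cone $\Gamma_\beta(\eta)$, for $\eta$ outside a null set $E\subset\T$. We first check that $\psi^{-1}(E)$ is null. Its measure is $\int_\T \mathbf 1_E(\psi(\xi))\,|d\xi|=\int_E w_\psi\,|d\eta|=0$ by Definition \ref{def:weight}.

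Now fix $\xi\notin\psi^{-1}(E)$ and let $z\to\xi$ inside $\Gamma_\alpha(\xi)$. Since $\phi$ is a homeomorphism of $\D$, it extends to $\overline\D$ with boundary map $\psi$, so $\phi(z)\to\psi(\xi)$. By the cone hypothesis \eqref{eq:cone}, $\phi(z)\in\Gamma_\beta(\psi(\xi))$. Hence $f(z)=g(\phi(z))\to g^*(\psi(\xi))$, which gives $f^*=g^*\circ\psi$ a.e.

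The definition of $w_\psi$ then gives $\int_\T|f^*|^p\,|d\xi|=\int_\T|g^*|^p w_\psi\,|d\eta|$. This is finite by the assumption $g^*\in L^p(w_\psi|d\zeta|)$.

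\textbf{Maximal estimate.} By \eqref{eq:cone}, $N_\alpha f(\xi)\le\sup_{w\in\Gamma_\beta(\psi(\xi))}|g(w)|=N_\beta g(\psi(\xi))$. Applying the push-forward identity to the Borel function $(N_\beta g)^p$ gives
\[
\int_\T N_\alpha f(\xi)^p\,|d\xi|\le\int_\T N_\beta g(\eta)^p\,w_\psi(\eta)\,|d\eta|.
\]

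We then need the weighted non-tangential maximal inequality
\[
\|N_\beta g\|_{L^p(w)}\le C\|g^*\|_{L^p(w)}, \qquad w\in A_p.
\]
This is the main obstacle, because $g$ is only assumed to be in $H^p$ and not a priori in a weighted class. The plan is as follows.
\begin{enumerate}[label=(\alph*)]
\item Since $p>1$, $g$ is the Poisson integral of $g^*$.
\item The pointwise bound $N_\beta(P[g^*])\le c_\beta\, \mathcal M g^*$ holds, where $\mathcal M$ is the Hardy--Littlewood maximal operator on $\T$.
\item Muckenhoupt's theorem gives $\|\mathcal M\|_{L^p(w)\to L^p(w)}\le C_p([w]_{A_p})$.
\end{enumerate}
Together these give
\[
\int N_\beta g^p\, w\le c_\beta^p C_p([w_\psi]_{A_p})^p\int|g^*|^p w=C_{p,\beta}\int_\T|f^*|^p\,|d\xi|.
\]

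\textbf{Conclusion.} This establishes \eqref{eq:admissible-max} with the stated $M$. Theorem \ref{thm:admissible}, via Corollary \ref{cor:curve-max}, then yields \eqref{eq:weighted-stoilow} with constant $B_\alpha C_{p,\beta}([w_\psi]_{A_p})$.

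One point to verify is measurability of $N_\alpha f$ and of $N_\beta g\circ\psi$. Both are lower semicontinuous because the cones are open and $f,g$ are continuous, so they are Borel and the push-forward identity applies.
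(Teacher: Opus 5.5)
Your proposal is correct and follows the paper's proof essentially step for step. The cone hypothesis gives $N_\alpha f\le N_\beta g\circ\psi$, and the push-forward identity turns this into a $w_\psi$-weighted integral. The pointwise bound $N_\beta g\le c_\beta\,\mathcal M g^*$ (valid since $g=P[g^*]$ for $p>1$) combined with Muckenhoupt's theorem gives the weighted estimate, and Theorem \ref{thm:admissible} finishes the argument. You supply more detail than the paper on the null-set pullback, the non-tangential boundary values, and measurability.

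One small correction: a homeomorphism of $\D$ need not extend continuously to $\overline\D$. The boundary extension you use should be attributed to the quasiconformality of $\phi$, or equivalently to the standing assumption that $\psi=\phi|_\T$ is a homeomorphism of $\T$, not merely to $\phi$ being a homeomorphism.
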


\begin{proof}
By \eqref{eq:cone},
\[
   N_\alpha f(\xi)\le N_\beta g(\psi(\xi)).
\]
Therefore, by the definition of $w_\psi$,
\[
   \int_\T N_\alpha f(\xi)^p\,|d\xi|
   \le
   \int_\T N_\beta g(\eta)^p w_\psi(\eta)\,|d\eta|.
\]
For analytic $H^p$ functions, $N_\beta g$ is pointwise controlled, up to a constant depending on $\beta$, by the Hardy--Littlewood maximal function of $|g^*|$. Since $w_\psi\in A_p$ and $1<p<\infty$, the weighted Hardy--Littlewood maximal theorem gives
\[
   \int_\T N_\beta g(\eta)^p w_\psi(\eta)\,|d\eta|
   \le C_{p,\beta}([w_\psi]_{A_p})
      \int_\T |g^*(\eta)|^p w_\psi(\eta)\,|d\eta|.
\]
The last integral equals
\[
   \int_\T |g^*(\psi(\xi))|^p\,|d\xi|
   =\int_\T |f^*(\xi)|^p\,|d\xi|.
\]
The boundary-value assertion follows because $g$ has non-tangential limits a.e. and $\psi_*(|d\xi|)=w_\psi|d\xi|$ is absolutely continuous. Thus $f$ satisfies the admissible maximal estimate of Definition \ref{def:admissible}. The conclusion follows from Theorem \ref{thm:admissible}.
\end{proof}

\begin{remark}
The assumptions in Theorem \ref{thm:weighted-stoilow} identify the boundary mechanisms needed for a norm comparison after Stoilow factorization. The factorization alone does not imply the right-hand side of \eqref{eq:weighted-stoilow}; the boundary weight and cone-distortion conditions make it possible to compare the analytic boundary norm of $g$ with the boundary norm of $f$.
\end{remark}

A simple concrete version is obtained under a bi-Lipschitz boundary hypothesis.

\begin{definition}\label{def:bilip}
Let $0<p<\infty$, $K\ge1$, $L\ge1$, and let $\alpha,\beta>1$. We say that $f$ belongs to the controlled bi-Lipschitz Stoilow class $\mathcal S^p_{K,L}(\alpha,\beta)$ if
\[
   f=g\circ\phi,
\]
where $g\in H^p$, $\phi:\D\to\D$ is $K$-quasiconformal, the boundary extension $\phi|_\T$ is $L$-bi-Lipschitz with respect to arclength on $\T$, and
\begin{equation}\label{eq:bilip-cone}
   \phi(\Gamma_\alpha(\xi))\subset \Gamma_\beta(\phi(\xi))
   \qquad (\xi\in\T).
\end{equation}
\end{definition}

\begin{theorem}[Controlled bi-Lipschitz Stoilow class]\label{thm:bilip}
Let $0<p<\infty$, $K\ge1$, $L\ge1$, and $\alpha,\beta>1$. There exists
\[
   A=A(p,L,\alpha,\beta)<\infty
\]
such that every $f\in\mathcal S^p_{K,L}(\alpha,\beta)$ and every convex curve $\Gamma\subset\D$ satisfy
\begin{equation}\label{eq:bilip}
   \int_\Gamma |f(z)|^p\ds(z)
   \le A\int_\T |f^*(\xi)|^p\,|d\xi|.
\end{equation}
\end{theorem}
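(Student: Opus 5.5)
The plan is to reduce the statement to the maximal-function principle of Theorem~\ref{thm:admissible}. To do this we show that every $f\in\mathcal S^p_{K,L}(\alpha,\beta)$ lies in $\mathcal G^p_{K,\alpha}(M)$ with $M=M(p,L,\beta)$. Write $\psi=\phi|_\T$. The bi-Lipschitz hypothesis gives $|\psi(J)|\ge L^{-1}|J|$ and $|\psi(J)|\le L|J|$ for arcs $J$. Hence $\psi_*(|d\xi|)$ is absolutely continuous, and its density satisfies $L^{-1}\le w_\psi\le L$ a.e. This holds because $w_\psi=1/|\psi'|\circ\psi^{-1}$, and $\psi$ is Lipschitz, hence a.e. differentiable with $L^{-1}\le|\psi'|\le L$. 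The cone condition \eqref{eq:bilip-cone} is exactly \eqref{eq:cone}. So for $1<p<\infty$ one could simply quote Theorem~\ref{thm:weighted-stoilow}: a weight pinched between $L^{-1}$ and $L$ is in $A_p$ with $[w_\psi]_{A_p}\le L^{2}$, and $g^*\in L^p(w_\psi)$ since $g^*\in L^p$. However, the statement covers all $0<p<\infty$, so I would instead argue directly. The direct route avoids weighted maximal theory entirely.

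The boundary values come first. Since $g\in H^p$, $g$ has non-tangential limits $g^*$ off a null set $E\subset\T$. Because $\psi^{-1}$ is Lipschitz, $\psi^{-1}(E)$ is null. For $\xi\notin\psi^{-1}(E)$ and $z\to\xi$ inside $\Gamma_\alpha(\xi)$, we have $\phi(z)\in\Gamma_\beta(\psi(\xi))$ by \eqref{eq:bilip-cone}. Also $\phi(z)\to\psi(\xi)$ by continuity of the homeomorphic extension of $\phi$ to $\overline{\D}$. Therefore $f(z)=g(\phi(z))\to g^*(\psi(\xi))$, which gives $f^*=g^*\circ\psi$ a.e.

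Next comes the maximal estimate. The cone inclusion gives $N_\alpha f(\xi)\le N_\beta g(\psi(\xi))$ pointwise. Changing variables with the density $w_\psi\le L$ yields
\[
\int_\T N_\alpha f(\xi)^p\,|d\xi| = \int_\T N_\beta g(\eta)^p w_\psi(\eta)\,|d\eta| \le L\int_\T N_\beta g(\eta)^p\,|d\eta|.
\]
Now I invoke the classical non-tangential maximal theorem for analytic Hardy spaces, valid for every $0<p<\infty$: $\|N_\beta g\|_{L^p}\le C_{p,\beta}\|g^*\|_{L^p}$. For $p\le1$ the proof goes via $|g|^{p/2}$ or via a Blaschke factorization reducing to $p=2$ (Burkholder--Gundy--Silverstein / Hardy--Littlewood). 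Finally, the reverse change of variables with $w_\psi\ge L^{-1}$ gives
\[
\int_\T|g^*(\eta)|^p\,|d\eta| \le L\int_\T |g^*(\eta)|^p w_\psi(\eta)\,|d\eta| = L\int_\T|f^*(\xi)|^p\,|d\xi|.
\]
Combining the two displays, $\|N_\alpha f\|_p^p\le L^2C_{p,\beta}^p\|f^*\|_p^p$. In particular $f^*\in L^p$, so $f\in\mathcal G^p_{K,\alpha}(M)$ with $M^p=L^2C^p_{p,\beta}$. Theorem~\ref{thm:admissible} then yields \eqref{eq:bilip} with $A=B_\alpha L^2C_{p,\beta}^p$, which is independent of $K$, as claimed.

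The main obstacle, though a mild one, is the justification of the density bounds $L^{-1}\le w_\psi\le L$ and the null-set transfer for the homeomorphism $\psi$. A second subtle point is using the $H^p$ non-tangential maximal theorem in the range $0<p\le1$, where the Hardy--Littlewood maximal operator is unbounded. There I would cite the standard subharmonicity argument: $N_\beta g\le C_\beta\,(M|g^*|^{q})^{1/q}$ for any $0<q<p$, which follows from the least harmonic majorant of $|g|^q$. Applying the maximal theorem in $L^{p/q}$ then gives the bound.
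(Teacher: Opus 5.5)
Your proposal is correct and follows essentially the same route as the paper. The cone inclusion gives $N_\alpha f\le N_\beta g\circ\psi$, the bi-Lipschitz density bounds $L^{-1}\le w_\psi\le L$ are used once on each side of the analytic non-tangential maximal theorem (valid for all $0<p<\infty$), and Theorem~\ref{thm:admissible} finishes. You additionally verify the a.e.\ boundary values $f^*=g^*\circ\psi$, which the paper leaves implicit. The only slip is cosmetic: the first relation in your displayed chain should be $\le$ (from the pointwise cone bound), followed by the change-of-variables equality.
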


\begin{proof}
Write $f=g\circ\phi$ as in Definition \ref{def:bilip}. The cone condition \eqref{eq:bilip-cone} gives
\[
   N_\alpha f(\xi)\le N_\beta g(\phi(\xi)).
\]
Since $\phi|_\T$ is $L$-bi-Lipschitz, the push-forward of arclength by $\phi|_\T$ has density bounded above and below by constants depending only on $L$. Therefore
\[
   \int_\T N_\alpha f(\xi)^p\,|d\xi|
   \le C(L)\int_\T N_\beta g(\eta)^p\,|d\eta|.
\]
The analytic non-tangential maximal theorem gives, for every $0<p<\infty$,
\[
   \int_\T N_\beta g(\eta)^p\,|d\eta|
   \le C(p,\beta)\int_\T |g^*(\eta)|^p\,|d\eta|.
\]
Using the lower measure bound from the bi-Lipschitz hypothesis,
\[
   \int_\T |g^*(\eta)|^p\,|d\eta|
   \le C(L)\int_\T |g^*(\phi(\xi))|^p\,|d\xi|
   =C(L)\int_\T |f^*(\xi)|^p\,|d\xi|.
\]
Thus $f$ satisfies the admissible maximal estimate with a constant depending only on $p,L,\alpha,\beta$. Theorem \ref{thm:admissible} proves \eqref{eq:bilip}.
\end{proof}

\begin{remark}
The cone condition is written as a hypothesis in Definition \ref{def:bilip}. This avoids hiding a boundary-regularity issue inside the phrase ``Stoilow factorization''. In applications one may verify \eqref{eq:bilip-cone} from additional regularity of the quasiconformal factor, but it is not the factorization itself that provides the boundary-norm comparison.
\end{remark}

\begin{corollary}[One-sided Stoilow estimate]\label{cor:one-sided}
Let $0<p<\infty$ and let $f=g\circ\phi$, where $g\in H^p$ and $\phi:\D\to\D$ is quasiconformal. Assume that, for some $\alpha,\beta>1$,
\[
   \phi(\Gamma_\alpha(\xi))\subset \Gamma_\beta(\phi(\xi))
   \qquad (\xi\in\T),
\]
and suppose that $\phi^{-1}|_\T$ is $L$-Lipschitz. Then for every convex curve $\Gamma\subset\D$,
\[
   \int_\Gamma |f(z)|^p\ds(z)
   \le C(p,L,\alpha,\beta)
      \int_\T |g^*(\eta)|^p\,|d\eta|.
\]
\end{corollary}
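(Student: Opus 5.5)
The plan is to follow the proof of Theorem~\ref{thm:bilip}, using only the half of the bi-Lipschitz argument that the hypotheses still support. We bound $\int_\Gamma|f|^p\ds$ by the $L^p$ norm of $N_\alpha f$ through Corollary~\ref{cor:curve-max}, and then compare $N_\alpha f$ with $N_\beta g\circ\psi$, where $\psi=\phi|_\T$. We stop once the analytic boundary norm of $g$ appears and never convert back to $f^*$. That conversion was the step that required the lower bound on the push-forward density, which is no longer assumed.

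The argument has three steps.

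\emph{Step 1.} By Corollary~\ref{cor:curve-max} applied to $u=f$,
\[
\int_\Gamma |f|^p\ds\le B_\alpha\int_\T N_\alpha f(\xi)^p\,|d\xi|.
\]

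\emph{Step 2.} Fix $\xi$ and $z\in\Gamma_\alpha(\xi)$. The cone hypothesis gives $\phi(z)\in\Gamma_\beta(\psi(\xi))$, so $|f(z)|=|g(\phi(z))|\le N_\beta g(\psi(\xi))$. Taking the supremum over $z$ yields $N_\alpha f\le (N_\beta g)\circ\psi$ pointwise on $\T$.

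\emph{Step 3.} Change variables on $\T$. Since $\psi^{-1}$ is $L$-Lipschitz with respect to arclength, $\psi^{-1}$ maps an arc $J$ onto an arc of length at most $L|J|$. Hence the push-forward measure $\psi_*(|d\xi|)$, which assigns $|\psi^{-1}(J)|$ to $J$, satisfies $\psi_*(|d\xi|)(J)\le L|J|$. It is therefore absolutely continuous, with density $w_\psi\le L$ a.e. (Lebesgue differentiation). This gives
\[
\int_\T N_\beta g(\psi(\xi))^p\,|d\xi|=\int_\T N_\beta g(\eta)^p w_\psi(\eta)\,|d\eta|\le L\int_\T N_\beta g(\eta)^p\,|d\eta|.
\]
Finally, the analytic non-tangential maximal theorem, already used in the proof of Theorem~\ref{thm:bilip} and valid for all $0<p<\infty$, gives $\|N_\beta g\|_p^p\le C(p,\beta)\|g^*\|_p^p$. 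Chaining the three steps yields the claim with $C(p,L,\alpha,\beta)=B_\alpha\,L\,C(p,\beta)$.

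The main point requiring care is the change of variables in Step~3. We need to justify that a one-sided Lipschitz bound on $\psi^{-1}$ gives an upper density bound for the push-forward of arclength. This works by checking the measure on arcs and extending to Borel sets by regularity. We also need measurability of $N_\beta g\circ\psi$, which holds because $N_\beta g$ is lower semicontinuous and $\psi$ is a homeomorphism.

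Unlike Theorem~\ref{thm:bilip}, no claim is made about $f^*$. The right-hand side is stated in terms of $g^*$ precisely because the reverse density bound, which would be needed to pass from $\int|g^*|^p$ to $\int|f^*|^p$, is unavailable here.
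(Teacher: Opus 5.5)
Your proposal is correct and follows the same route as the paper. The paper likewise uses the cone condition to get $N_\alpha f\le (N_\beta g)\circ\phi|_\T$, then the Lipschitz bound on $\phi^{-1}|_\T$ for a one-sided change of variables, and finally the analytic non-tangential maximal theorem together with Corollary~\ref{cor:curve-max}; you additionally spell out details the paper's short proof leaves implicit, namely the density bound $w_\psi\le L$ obtained via arcs and regularity, and the measurability of $N_\beta g\circ\psi$.
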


\begin{proof}
The cone condition gives $N_\alpha f(\xi)\le N_\beta g(\phi(\xi))$. The Lipschitz condition on $\phi^{-1}|_\T$ gives the one-sided change-of-variables estimate
\[
   \int_\T N_\alpha f(\xi)^p\,|d\xi|
   \le C(L)\int_\T N_\beta g(\eta)^p\,|d\eta|.
\]
The analytic non-tangential maximal theorem and Corollary \ref{cor:curve-max} complete the proof.
\end{proof}

\section{Open problems}

The results above leave several natural sharpness and boundary-regularity questions.

\begin{problem}[Sharp harmonic $K$-quasiregular constant]
For harmonic $K$-quasiregular mappings in $h^2$, Theorem \ref{thm:main} gives
\[
   A^{\mathrm{harm}}_{2,K}\le \frac{4K^2}{K^2+1}.
\]
Is this bound sharp for $1<K<\infty$? If it is not sharp, determine the correct dependence on $K$. In either case, identify extremal or asymptotically extremal families.
\end{problem}

\begin{problem}[Quasiregular Hardy classes with boundary control]
Find natural subclasses of the radial quasiregular Hardy class \eqref{eq:radial-class} for which almost everywhere non-tangential boundary values and an estimate of the form
\[
   \|N_\alpha f\|_{L^p(\T)}
   \le C\|f^*\|_{L^p(\T)}
\]
hold. For such classes, determine the best Gabriel constant and its dependence on $p$ and $K$.
\end{problem}

\begin{problem}[Boundary distortion in Stoilow factorization]
Can the $A_p$ boundary-weight hypothesis in Theorem \ref{thm:weighted-stoilow} be weakened? In particular, is an $A_\infty$ condition, a reverse-H\"older condition, or a condition depending only on the quasisymmetry data of $\psi=\phi|_\T$ sufficient for a boundary-norm Gabriel inequality, possibly together with an appropriate non-tangential approach-region condition?
\end{problem}

\begin{problem}[Endpoint exponents]
Das's theorem shows that the full harmonic convex-curve inequality fails in general for $0<p\le1$. Which additional quasiregular or harmonic quasiregular assumptions restore a Gabriel-type inequality at $p=1$ or below?
\end{problem}

\section*{Statements and Declarations}

\noindent\textbf{Funding.}
This research received no specific grant from any funding agency in the public, commercial, or not-for-profit sectors.

\medskip
\noindent\textbf{Competing interests.}
The author declares no competing interests.

\medskip
\noindent\textbf{Data availability.}
No datasets were generated or analysed during the current study.

\end{document}